\newcommand{\zz}{{\Bbb Z}}
\newcommand{\cc}{{\Bbb C}}
\newcommand{\pp}{{\Bbb P}}
\newcommand{\aaa}{{\Bbb A}}
\newcommand{\ff}{{\Bbb F}}
\newcommand{\ddeg}{\operatorname{deg}}
\newcommand{\ddet}{\operatorname{det}}
\newcommand{\Hom}{\operatorname{Hom}}
\newcommand{\op}[1]{\operatorname{#1}}
\newcommand{\row}{\rightarrow}
\newcommand{\lrow}{\longrightarrow}
\renewcommand{\leq}{\leqslant}
\renewcommand{\geq}{\geqslant}
\newcommand{\gm}{{\Bbb G}_m}
\newcommand{\calm}{{\cal M}}
\newcommand{\hm}{\operatorname{H}_{\calm}}
\newcommand{\nichego}[1]{}
\newcommand{\ov}[1]{\overline{#1}}
\newcommand{\wt}[1]{\widetilde{#1}}
\newcommand{\laz}{{\Bbb L}}
\newcommand{\cq}{{\cal Q}}
\newcommand{\sh}{{\cal SH}}
\newcommand{\shE}[1]{{\cal SH}_{{\Bbb{A}}^1}(#1)}
\newcommand{\shcE}[1]{{\cal SH}^c_{{\Bbb{A}}^1}(#1)}
\newcommand{\dmE}[1]{\op{DM}(#1)}
\newcommand{\moco}[2]{H_{{\cal M}}^{#1,#2}}
\newcommand{\Qed}{\hfill$\square$\smallskip}
\newenvironment{proof}{\noindent{\it Proof}:}{\vskip 5mm}
\newtheorem{prop}{Proposition}[section]{\bf}{\it}
\newtheorem{thm}[prop]{Theorem}{\bf}{\it}
\newtheorem{lem}[prop]{Lemma}{\bf}{\it}
{\bf}{\it}
{\bf}{\it}
{\bf}{\it}
{\bf}{\it}
{\bf}{\it}
\newtheorem{rem}[prop]{Remark}{\bf}{}
{\bf}{\it}
\newtheorem{cor}[prop]{Corollary}{\bf}{\it}
{\bf}{\it}
\begin{document}

\title{On torsion spaces
\renewcommand{\thefootnote}{\fnsymbol{footnote}} 
\footnotetext{MSC2010 classification: 14F42;\ Keywords: Morava K-theory, motive, Milnor's operations, $p$-level}     
\renewcommand{\thefootnote}{\arabic{footnote}}
}
\author{Alexander Vishik}
\date{}
\maketitle

%\tableofcontents

\begin{abstract}
The purpose of this note is to construct examples of compact torsion objects of $\shE{F}$ of every $p$-level over an arbitrary field of characteristic different from $p$. We adapt
the approach of Mitchell \cite{Mit} to the algebraic situation.
We show that the respective level is determined by the action
of the $v_m$-elements on the $MGL$-motive, and also prove the refinement which permits to distinguish {\it isotropic Morava points}
of the Balmer spectrum of $\shcE{F}$.
\end{abstract}

\section{Introduction}

The comparison between $\sh$ and its' motivic category $D(Ab)$ can be visualised by the map of the respective Balmer spectra
(of the subcategories of compact objects) 
$\op{Spc}(D(Ab)^c)\row\op{Spc}(\sh^c)$, which is an embedding.
Here every (closed) point ${\frak a}_p\in\op{Spc}(D(Ab)^c)=\op{Spec}(\zz)$ is an end-point (and a specialisation) of an infinite ordered chain of primes  ${\frak a}_{p,n}$, $1\leq n\leq\infty$ of characteristic $p$ corresponding to Morava K-theories $K(p,n)$ (usually, denoted simply as $K(n)$) - see \cite[Corollary 9.5]{BalSSS}.
This computation is the fruit of the famous {\it Nilpotence Theorem} of Devinatz-Hopkins-Smith \cite{DHS}. The prime tensor-triangulated ideal ${\frak a}_{p,n}$ consists of compact
objects whose $n$-th Morava K-theory $K(n)$ is trivial (this automatically implies triviality of smaller Morava K-theories). Alternatively, these compact objects $Y$ are characterized by the property that $BP^*(Y\wedge Y^{\vee})$ is annihilated by some power of $v_n$, where the
latter is the standard generator of $BP=\zz_{(p)}[v_1,v_2,\ldots]$ of dimension $p^{n}-1$.  

The ideal ${\frak a}_{p,m}$ is embedded into ${\frak a}_{p,n}$, for $m>n$, and so, the former point is a specialization of the latter one. The fact that these points are indeed all different is established with the help of some ``test spaces'' $X_{p,n}$ whose
level is exactly $n$. In other words, $K(m)^*(X_{p,n})=0$, for $m<n$, and is non-zero, for $m\geq n$. There are various constructions of such spaces. One such elegant construction is
due to Mitchell \cite{Mit}. Here $X_{p,n}$ is realised as a direct summand of the suspension spectrum of a quotient of a general linear group $GL(\cc,p^n)$ by an $n$-dimensional $\ff_p$-vector space acting permutationally on the basis (with some modifications for $p=2$).

In this article we adapt the construction of Mitchell to the algebraic situation. This is pretty straightforward and most of the computations of Mitchell can be translated directly into algebraic geometry. But we stll need to use different tools of 
the algebro-geometric world in order to establish the needed properties of our objects. In particular, we had to adjust the
construction for $p=2$, because the orthogonal group (used in the
original construction) is not {\it special}, which makes the respective objects not handy (not $\zz/p$-cellular). 
Instead, we use the same construction as for odd $p$, but with
$n$ increased by one. We establish that (as in topology) the motive of $X_{p,n}$ is $\zz/p$-cellular and the respective motivic cohomology is a
free module over the Milnor subalgebra $\Lambda_H(Q_0,\ldots,Q_{n-1})$ generated by the first $n$ Milnor's operations.
This implies that the respective Morava K-theories $K(m)^{*,*'}$,
for $1\leq m<n$ are trivial on $X_{p,n}$ (since Milnor's operations $Q_m$ are exactly the first differentials in the respective Atiyah-Hirzebruch spectral sequences). Then, using
the mentioned spectral sequence and the information about the grading of our motivic cohomology we show that the Morava K-theories $K(m)^{*,*'}(X_{p,n})$ of our space
is non-trivial, for $m\geq n$. Here the double grading on motivic cohomology in algebraic geometry (as opposed to single grading in topology) comes very helpful and makes things much more transparent. This establishes that $X_{p,n}$ has the $p$-level exactly $n$ - Theorem \ref{main}. We also prove that the action of $v_m$ on the $MGL$-motive of $X_{p,n}$ is nilpotent, for $m<n$, and is not 
nilpotent, for $m\geq n$, - see Theorem \ref{MGL-Xpn}. 
Finally, we prove that the $p$-level of 
$X_{p,n}$ doesn't change, if we smash it with the (suspension spectrum of the) reduced \v{C}ech simplicial scheme $\wt{{\frak X}}_R$ of any $K(n)$-anisotropic variety $R$ - see Proposition \ref{Xpn-hi}. This latter result
permits to use our spaces $X_{p,n}$ to distinguish the
{\it level} of {\it isotropic Morava points} of 
$\op{Spc}(\shcE{F})$ - see
\cite{BsMV}.  

Over a field $\cc$, another example of a torsion space of $p$-type $n$ was given in \cite{Joach}. It implements the construction of Ravenel \cite[Appendix C]{Rav} in the algebro-geometric context.  

\medskip

\noindent
{\bf Acknowledgements:} I'm grateful to Burt Totaro for stimulating questions. The support of the EPSRC standard grant EP/T012625/1 is gratefully acknowledged.

\section{The construction of Mitchell}

We will construct a compact object $X_{p,n}$ of $\shE{F}_{(p)}$, such that the $m$-th Morava K-theory $K(m)$ is trivial on it, for any
$m<n$, over any extension $E/F$, while for $m\geq n$, it is non-trivial. 
In other words, $X_{p,n}\wedge K(m)$ is zero, for $m<n$, and non-zero, for $m\geq n$, which means that the $p$-level of our object is exactly $n$.
$X_{p,n}$ will be a direct summand in the suspension spectrum of some smooth variety over $F$.

\subsection{The case of an odd prime}
Let $V=(\zz/p)^{\times n}$ be an $n$-dimensional 
$\ff_p$-vector
space. Consider $S=\ff_p[y_1,\ldots,y_n]$ with every generator of degree $1$. We have a natural
action of $GL(V)$ on $S$. The classical result of Dickson then claims:

\begin{thm} {\rm(Dickson)}
 \label{Dickson}
 The subring of invariants $S^{GL(V)}$ is a polynomial
 algebra on generators $D_1,\ldots, D_n$, where 
 $\ddeg(D_r)=p^n-p^{n-r}$.
\end{thm}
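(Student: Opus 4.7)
The plan is to construct $D_1,\ldots,D_n$ explicitly as coefficients of a single $GL(V)$-invariant polynomial, verify algebraic independence by a Krull-dimension count, and then identify the subring they generate with $S^{GL(V)}$ by a comparison of fraction-field degrees. First I would introduce
$$f_n(T)\ :=\ \prod_{v\in V}\bigl(T-\ell_v(y)\bigr),\qquad\text{where}\quad\ell_v(y)=\sum_{i=1}^n v_i y_i\in S,$$
and note that the coefficients of $f_n$, viewed as a polynomial in $T$ over $S$, are $GL(V)$-invariant since $GL(V)$ permutes $V$ and hence the roots $\ell_v$. The set of roots is an $\ff_p$-subspace of rank $n$ inside $\op{Frac}(S)$, so by the standard theory of additive polynomials in characteristic $p$, $f_n$ is $\ff_p$-linear in $T$, and all its $T$-coefficients except those of the powers $T^{p^r}$ vanish. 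Writing
$$f_n(T)\ =\ \sum_{r=0}^{n}(-1)^{n-r}\,D_{n-r}\,T^{p^r},\qquad D_0=1,$$
defines the $D_r\in S^{GL(V)}$, and reading off the $y$-degrees of the coefficients gives $\ddeg(D_r)=p^n-p^{n-r}$.

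Setting $A:=\ff_p[D_1,\ldots,D_n]$, each $y_i$ is a root of $f_n$, so $S$ is integral over $A$, and being a finitely generated $\ff_p$-algebra it is finite over $A$. Therefore $\ddim A\geq\ddim S=n$, while $A$ is generated by $n$ elements, forcing $\ddim A=n$ and hence $D_1,\ldots,D_n$ to be algebraically independent; in particular $A$ is itself a polynomial ring.

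For the reverse inclusion $S^{GL(V)}\subseteq A$ I would use the identity
$$\prod_{r=1}^{n}\ddeg(D_r)\ =\ \prod_{r=1}^{n}p^{n-r}(p^r-1)\ =\ p^{n(n-1)/2}\prod_{r=1}^{n}(p^r-1)\ =\ |GL_n(\ff_p)|.$$
The finite extension $A\subset S$ of graded Cohen-Macaulay rings with $A$ regular makes $S$ a free graded $A$-module, and comparing Hilbert series
$$\frac{1}{(1-t)^n}\ =\ P(t)\cdot\prod_{r=1}^{n}\frac{1}{1-t^{p^n-p^{n-r}}}$$
yields generic rank $P(1)=\prod_{r=1}^n(p^n-p^{n-r})=|GL(V)|$. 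Hence $[\op{Frac}(S):\op{Frac}(A)]=|GL(V)|=[\op{Frac}(S):\op{Frac}(S^{GL(V)})]$, and since $A$ and $S^{GL(V)}$ are both integrally closed in $\op{Frac}(S)$, the inclusion $A\subseteq S^{GL(V)}$ must be an equality.

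The principal obstacle is this final comparison: deducing from the degree product that $A$ exhausts $S^{GL(V)}$ rather than sitting inside as a proper subring of finite index. The coincidence $\prod_{r=1}^{n}(p^n-p^{n-r})=|GL_n(\ff_p)|$ is the essential input preventing this; as an alternative route one could invoke modular Chevalley-Shephard-Todd theory, using that $GL_n(\ff_p)$ is generated by transvections on $V^*$.
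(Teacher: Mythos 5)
The paper cites Dickson's theorem as a classical result and gives no proof (it references Steinberg's ``On Dickson's theorem on invariants'' in the bibliography), so there is nothing internal to compare against; your argument has to be judged on its own.

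Your proof is correct and is, in fact, a faithful reconstruction of the Steinberg-style argument. The key steps all hold up: the polynomial $f_n(T)=\prod_{v\in V}(T-\ell_v(y))$ is $\ff_p$-linear in $T$ because the roots form an $\ff_p$-subspace (Moore/Ore theory of additive polynomials), so only the $T^{p^r}$ coefficients survive, and reading degrees gives $\ddeg(D_r)=p^n-p^{n-r}$. Integrality of $S$ over $A=\ff_p[D_1,\ldots,D_n]$ follows since each $y_i$ is a root of $f_n$, and the Krull-dimension count forces algebraic independence, so $A$ is polynomial, hence regular and integrally closed. Since $S$ is Cohen--Macaulay, graded and finite over the regular graded ring $A$, it is $A$-free, and the Hilbert-series identity gives generic rank $\prod_r(p^n-p^{n-r})=|GL_n(\ff_p)|$. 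On the other side, $\op{Frac}(S)^{GL(V)}=\op{Frac}(S^{GL(V)})$ by the standard averaging of denominators, and $[\op{Frac}(S):\op{Frac}(S)^{GL(V)}]=|GL(V)|$ by Artin's lemma (the action on $S_1=V^*$ is faithful). Comparing degrees gives $\op{Frac}(A)=\op{Frac}(S^{GL(V)})$, and since $S^{GL(V)}$ is integral over $A$ and $A$ is integrally closed, $A=S^{GL(V)}$. One small caution on your alternative route: the modular Chevalley--Shephard--Todd direction you would need (pseudo-reflection group implies polynomial invariants) is false in general; it does go through for $GL_n(\ff_p)$ acting on $V^*$ because the group is generated by transvections (Nakajima/Serre), but this requires that extra hypothesis, so the main Hilbert-series argument you give is the cleaner and safer one.
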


Let $B,U$ and $\Sigma$ denote the Borel, unipotent and permutation subgroup respectively (corresponding to a given
basis of $V$). Following Steinberg \cite{St}, consider the
following idempotent $e_0\in\zz_{(p)}[GL(V)]$ in the group ring of $GL(V)$. 
$$
e_0=[GL(V):U]^{-1}\left(\sum_{b\in B}b\right)
\left(\sum_{\sigma\in\Sigma}sign(\sigma)\sigma\right).
$$
It defines an irreducible representation of $GL(V)$ of
dimension $p^{\binom{n}{2}}$. We have a natural group
homomorphism $\ddet:GL(V)\row\ff_p^{\times}$ which induces
a series of automorphisms $\phi_k$, $0\leq k\leq p-2$
of the ($p$-localised) group ring
$\zz_{(p)}[GL(V)]$ given by: $\phi_k(g)=\ddet^k(g^{-1})g$.
We obtain idempotents $e_k=\phi_k(e_0)$,
$0\leq k\leq p-2$. 

Let $p$ be odd and $F$ be some field of characteristic different from $p$. We have a natural embedding $V\row GL(p^n,F)$ given by the regular
representation of $V$ over $F$, which is part of a permutational representation of the group of affine
transformations of $V$ on $V$. 
The latter group is an extension
$$
1\row V\row Aff(V)\row GL(V)\row 1,
$$
and we have the natural action of $GL(V)$ on 
$V\backslash GL(p^n,F)$. 
Choose some $1\leq k\leq p-2$.
Following Mitchell \cite{Mit}, we define 
$Y:=\Sigma^{\infty}(V\backslash GL(p^n,F))\cdot e_k$ - the application of the projector $e_k$ to the suspension spectrum of $V\backslash GL(p^n,F)$.
We claim that $Y$ is a torsion space of $p$-level $n$, exactly as its topological counterpart.

Let $G$ be a linear algebraic group over $F$. We will denote as
$BG$ the {\it geometric} classifying space of $G$ over $F$
as defined in \cite{To} and \cite[Prop. 4.2.6]{MV} (see also \cite[Section 2.5]{SSWc}).
It classifies $G$-torsors in the etale topology. It comes
equipped with the principal $G$-bundle
$EG\row BG$, where both objects are colimits of smooth 
algebraic varieties and $EG$ is an open subscheme of an
infinite-dimensional affine space. 

Denote $GL(p^n,F)$ simply as $GL$. The embedding
$V\row GL$ leads to the cartesian
square in $\op{Spc}$:
\begin{equation}
 \label{car-sq}
 \xymatrix{
 \ov{V\backslash GL} \ar[r] \ar[d] & EGL \ar[d]\\
 BV \ar[r]^{\rho} & BGL
 },
\end{equation}
where $\rho$ is a morphism of ind-varieties and 
$\ov{V\backslash GL}=V\backslash GL\times EGL$. In particular,
the latter object can be identified in $\shE{F}$ with 
$V\backslash GL$.
The vertical maps here are principal $G$-bundles, so smooth
morphisms. 
In particular, in the category of motives over $BGL$ and $BV$
we have: $\rho^*M(EGL\row BGL)=M(\ov{V\backslash GL}\row BV)$.
Since $GL$ is a {\it special} group, the motive of the natural projection $M(BGL_{Nis}\row BGL)$ from the Nisnevich
classifying space to the {\it geometric} one coincides with
the trivial Tate-motive $M(id)$ - see 
\cite[Prop. 2.5.6]{SSWc} and \cite[Lemma 1.15]{MV}. By \cite[Prop. 3.2.7]{SSWc},
$$
M(EGL_{Nis}\row BGL_{Nis})=
\otimes_{i=1}^{p^n}\op{Cone}[-1](T\stackrel{c_i}{\row}T(i)[2i]),
$$
where $T$ is the trivial Tate-motive and $c_i$ is the $i$-th Chern class 
(an element of $\hm^{2i,i}(BGL,\zz)=\Hom_{DM(BGL)}(T,T(i)[2i])$). Hence, the same is true about the geometric variant $M(EGL\row BGL)$. Thus, we obtain:
\begin{equation*}
 M(\ov{V\backslash GL}\row BV)=\otimes_{i=1}^{p^n}
 \op{Cone}[-1](T\stackrel{\rho^*(c_i)}{\lrow}T(i)[2i]).
\end{equation*}
From \cite[Prop. 4.5]{Mit} we know that, with $\ff_p$-coefficients,
\begin{equation*}
\rho^*(c_i)=
\begin{cases}
&\pm D_r,\,\,\text{if}\,\,i=p^n-p^{n-r};\\
&0,\,\,\text{otherwise},
\end{cases}
\end{equation*}
where $\hm^{*,*'}(BV,\ff_p)=H[y_1,\ldots,y_n]\otimes
\Lambda(x_1,\ldots,x_n)$ with 
$H=\hm^{*,*'}(\op{Spec}(F),\ff_p)$, 
$\ddeg(y_i)=(1)[2]$,
$\ddeg(x_i)=(1)[1]$ - \cite[Theorem 6.10]{VoOP}, 
and $D_r$ are Dickson's invariants
in $y$-variables.
Thus, the mod $p$ motive is
$$
M(\ov{V\backslash GL}\row BV)=N\otimes\Lambda,
$$
where $\Lambda=\Lambda(z_i| 1\leq i\leq p^n, i\neq p^n-p^s)$,  $\ddeg(z_i)=(i)[2i-1]$ and
$$
N=\otimes_{r=1}^{n}\op{Cone}[-1](T\stackrel{D_r}{\row}
T(p^n-p^{n-r})[2(p^n-p^{n-r})]).
$$
Let $\pi:BV\row\op{Spec}(F)$ be the natural projection. Then 
$\pi_*(M(\ov{V\backslash GL}\row BV))=M(V\backslash GL)\in
\dmE{F}$. Thus, $M(V\backslash GL)$ is an external ``algebra'' over $\pi_*(N)$ in the prescribed generators.
We aim to compute the exact value of the $p$-level of $Y$, that is, to evaluate the triviality/non-triviality of various Morava K-theories of $Y$ over all extensions of $F$. By transfer arguments, we may assume that our field contains $p$-th roots of $1$, and so, $\zz/p$ has a $1$-dimensional faithful representation. Then,
considered with $\ff_p$-coefficients, the motive of $BV$
($=\pi_*(T)$) is $\zz/p[y_1^{\vee},\ldots,y_n^{\vee}]\otimes\Lambda(x_1^{\vee},\ldots,x_n^{\vee})$.
In particular, it is $\zz/p$-{\it cellular} (a direct sum of shifted copies of $\zz/p$).
Since $D_1,\ldots,D_n$ form a regular sequence in
$S=\ff_p[y_1,\ldots,y_n]$, from the exactness of the
Koszul complex it follows that (denoting $G:=GL(V)$)
$$
\pi_*(N)=\zz/p\otimes\big((S\otimes_{S^{G}}\ff_p)\otimes\Lambda(x_1,\ldots,x_n)\big)^{\vee}.
$$
So,
the motive of $V\backslash GL$ (with $\ff_p$-coefficients) is also $\zz/p$-cellular and its motivic cohomology is a free module over
$H$. 
If $char(F)=0$,
the topological realization functor maps 
$\zz/p(c)[d]$ to $\zz/p[d]$ and commutes with Milnor's operations $Q_i$ - see \cite{VoOP}. The projector $e_k$ defines a certain
bi-graded subspace in this bi-graded $\ff_p$-vector space.
From the result of Mitchell - \cite[Theorem 4.6]{Mit} we know
that the topological realization of it is a free module
over $\cq=\Lambda_{\ff_p}(Q_0,\ldots,Q_{n-1})$. This implies
that $\hm^{*,*'}((V\backslash GL)\cdot e_k,\ff_p)$ is a free module
over $\Lambda_H(Q_0,\ldots,Q_{n-1})$. 
If $char(F)$ is positive, we may consider the Chow-degree filtration $F_l:=\{\op{deg}_{Ch}\geq -l\}$ on $\Lambda$ (where $\op{deg}_{Ch}((i)[j])=j-2i$), which shows that as a $\cq$-module,
$\hm^{*,*'}((V\backslash GL)\cdot e_k,\ff_p)$ is an extension
of modules $H\otimes L=H\otimes\big((S\otimes_{S^{G}}\ff_p)\otimes\Lambda(x_1,\ldots,x_n)\big)\cdot e_k$ (cf. the case $p=2$ below), where the action comes from that on $\hm^{*,*'}(BV,\ff_p)$.
But for the latter motivic cohomology, the above assignment
$\zz(c)[d]\mapsto\zz[d]$ commutes with the action of (Steenrod and) Milnor's operations, since it is so for $B\zz/p$ by \cite{HKO}. And $L$ is a free module over $\cq$ by \cite[Theorem 3.1(b)]{Mit}. Thus, $\hm^{*,*'}((V\backslash GL)\cdot e_k,\ff_p)$ 
is, again, a free module over $\Lambda_H(Q_0,\ldots,Q_{n-1})$.
In particular,
Bockstein acts as an exact differential in this motivic cohomology, which means that the integral motive of
$(V\backslash GL)\cdot e_k$ is $p$-torsion, and hence,
$\zz/p$-cellular. The fact that our motivic cohomology
is a free module over $\cq$ means that the 
$p$-level of our object is $\geq n$, as the Milnor's operation
$Q_m$ is the first differential of the Atiyah-Hirzebruch spectral sequence converging to $K(m)^{*,*'}$.

To find the exact value of the level, observe that 
$((S\otimes_{S^{G}}\ff_p)\otimes\Lambda(x_1,\ldots,x_n))
\cdot e_k$ is
concentrated in bi-degrees $(i)[j]$ with $0\leq 2i-j\leq n$, and 
$Q_i$ decreases the number $2i-j$ by $1$. Since this module is free over $\cq$, by \cite[Theorem 3.1(b)]{Mit}, there is an element $\alpha$ of this module, which we will call a ``generator'' (since it is an element with the smallest $i$ and, simultaneously, the smallest $j$) of bidegree
$(kq)[2kq-n]$, where 
$\displaystyle q=\left(\frac{p^n-1}{p-1}\right)$. Then
$\beta:=Q_{n-1}\circ\ldots\circ Q_0(\alpha)$ is a non-zero element of bidegree $(i)[2i]$, for some $i$. Let $r\geq n$. Consider the Atiyah-Hirzebruch spectral sequence 
$$
\moco{*}{*'}(Y;\ff_p)\otimes_{\ff_p}\ff_p[v_r,v_r^{-1}]\Rightarrow K(r)^{*,*'}(Y),
$$
converging from motivic cohomology
of $Y$ to the $r$-th Morava K-theory of it.
By degree reason, all the differentials of this spectral sequence
vanish on $\beta$ (since the targets are above the line of slope two). On the other hand, since $\alpha$ was a ``generator'' and
the ``round size'' of these differentials is greater or equal than the size of $Q_r$, which in turn, is larger than that of 
$Q_{n-1}\circ\ldots\circ Q_0$, no differentials may hit $\beta$.
Thus, this element survives in $E_{\infty}$ and so,
$K(r)^{*,*'}(Y)\neq 0$. Hence, the $p$-level of $Y$ is exactly $n$. It remains to denote: $X_{p,n}:=Y$.

\begin{rem}
 The Mitchell's torsion space $Y$ is produced as a direct
 summand in the suspension spectrum of a smooth variety
 $V\backslash GL$. But this variety is not projective,
 so we don't get a {\it torsion motive} in the sense of
 \cite{TM}, as our object is not {\it pure}. Moreover, it
 is not particularly interesting from motivic point of
 view, since its motive is $\zz/p$-cellular (note, that a
 $\zz/p$-cellular motive can't be pure).
\end{rem}

\subsection{The case $p=2$}
In the case of $p=2$ we will deviate from the 
path chosen by Mitchell and will use instead the same
construction as for odd primes. The reason for this is that
our goals are somewhat different (we are interested only in the module structure over Milnor's subalgebra, but not in that
over the whole Steenrod algebra, and we are flexible with
the size of the object), while the original construction of
Mitchell meets obstacles due to the fact that the orthogonal
group (used there instead of $GL$) is not {\it special}.

Let $F$ be some field of characteristic different from $2$
and $V=(\zz/2)^{\times n}$, with $n\geq 2$. 
As above, we have a natural
embedding $V\row GL(2^n,F)$ given by the regular representation of $V$. Then there is an action of $GL(V)$
on $V\backslash GL(2^n,F)$. Let 
$Y=\Sigma^{\infty}(V\backslash GL(2^n,F))\cdot e_0$, where
$e_0$ is the Steinberg projector defined above.

Denote $GL(2^n,F)$ simply as $GL$.
In exactly the same way as for odd primes, we obtain that
\begin{equation*}
 M(\ov{V\backslash GL}\row BV)=\otimes_{i=1}^{2^n}
 \op{Cone}[-1](T\stackrel{\rho^*(c_i)}{\lrow}T(i)[2i]).
\end{equation*}
It still holds true that, with $\zz/2$-coefficients,
\begin{equation*}
\rho^*(c_i)=
\begin{cases}
&D_r,\,\,\text{if}\,\,i=2^n-2^{n-r};\\
&0,\,\,\text{otherwise},
\end{cases}
\end{equation*}
where $\hm^{*,*'}(BV,\zz/2)=H[y_1,\ldots,y_n,x_1,\ldots,x_n]/(x_i^2=y_i\tau+x_i\{-1\})$ - \cite[Theorem 6.10]{VoOP}, where $D_r$ are Dickson's invariants in the 
$y$-variables.
Indeed, for any $p$, odd or even, our embedding of groups
can be decomposed as $V\row{\Bbb{G}}_m^{\times n}\row GL$,
such that the pull-back of Chern roots of the tautological 
bundle on $GL$ will be exactly all elements of degree
$(1)[2]$ of the subalgebra $\ff_2[y_1,\ldots,y_n]$
(coming from the torus). The elementary symmetric functions of them will be exactly as described - see \cite[Lemma 1]{St-D}.
Thus, as for odd primes, our mod $2$ relative motive is
$$
M(\ov{V\backslash GL}\row BV)=N\otimes\Lambda,
$$
where $\Lambda=\Lambda(z_i| 1\leq i\leq 2^n, i\neq 2^n-2^s)$ with
$\ddeg(z_i)=(i)[2i-1]$ and
$$
N=\otimes_{r=1}^{n}\op{Cone}[-1](T\stackrel{D_r}{\row}
T(2^n-2^{n-r})[2^{n+1}-2^{n-r+1}]).
$$
The motive of $BV$ (with $\zz/2$-coefficients)
is still 
$\zz/2[y_1^{\vee},\ldots,y_n^{\vee}]\otimes\Lambda(x_1^{\vee},\ldots,x_n^{\vee})$
and the sequence $D_1,\ldots,D_n$ is still regular.
Hence, as above,
\begin{equation}
\label{piN}
\pi_*(N)=\zz/2\otimes\big((S\otimes_{S^{GL(V)}}\ff_2)\otimes\Lambda(x_1,\ldots,x_n)\big)^{\vee}.
\end{equation}
In particular, the mod $2$ motive of
$V\backslash GL$ is $\zz/2$-cellular.

If $char(F)=0$, denote as $\ov{y}_i$ and $\ov{x}_i$ the topological
realizations of classes $y_i$ and $x_i$.
Then $\ov{y}_i=\ov{x}_i^2$ and
$H^*(BV,\zz/2)=\ff_2[\ov{x}_1,\ldots,\ov{x}_n]=:S_0$.
Denote $GL(V)$ as $G$ and the Hopf algebra 
$\Lambda_{\ff_2}(Q_0,\ldots,Q_{n-2})$ of Milnor as $\cq$.
Denote as $\wt{D}_r$ the Dickson's invariants in the 
$\ov{x}_i$-variables. Then $\ov{(D_r)}=\wt{D}_r^2$ and
$\ov{S^{G}}=Fr_2(S_0^{G})$, where $Fr_2$ is the
$2$-Frobenius map. 
Note that $S_0$, $S_0^G$ and $Fr_2(S_0^G)$ are algebras
over the group Hopf algebra $\cq[G]$.
The topological realisation of
$(S\otimes_{S^{G}}\ff_2)\otimes\Lambda(x_1,\ldots,x_n)$
is $S_0\otimes_{Fr_2(S^{G}_0)}\ff_2$, which can be rewritten further as 
$S_0\otimes_{S_0^{G}}S_0^{G}
\otimes_{Fr_2(S_0^{G})}\ff_2$.
The module $S_0^{G}
\otimes_{Fr_2(S_0^{G})}\ff_2$ has a filtration by
$S_0^{G}$-submodules with trivial graded pieces.
Hence, as a module over $\cq[G]$, $S_0\otimes_{Fr_2(S^G_0)}\ff_2$ is an
extension of the shifted copies of 
$S_0\otimes_{S^{G}_0}\ff_2$. By the result of Mitchell
\cite[Theorem 3.1(c)]{Mit}, 
$(S_0\otimes_{S^{G}_0}\ff_2)\cdot e_0$
is a free module
over $\cq$. Hence, the
same is true about 
$(S_0\otimes_{Fr_2(S^{G}_0)}\ff_2)\cdot e_0$. 

Taking the Chow-degree filtration on $\Lambda$,
we obtain that as a $\cq[G]$-module,
$(S_0\otimes_{Fr_2(S^{G}_0)}\ff_2)\otimes\Lambda$ is an extension
of shifted copies of $S_0\otimes_{Fr_2(S^{G}_0)}\ff_2$.
Since $(S_0\otimes_{Fr_2(S^{G}_0)}\ff_2)\cdot e_0$
is a free module over $\cq$,
so is $((S_0\otimes_{Fr_2(S^{G}_0)}\ff_2)\otimes\Lambda)
\cdot e_0$. 

Finally, since $M(Y;\zz/2)$ is $\zz/2$-cellular and the 
topological realisation of it coincides with the latter
module, we obtain that $\hm^{*,*'}(Y,\ff_2)$ is a free
module over $\Lambda_H(Q_0,\ldots,Q_{n-2})$. 
If $char(F)$ is positive, considering the same Chow-degree filtration on $\Lambda$,
we still have that as a $\cq$-module, $\hm^{*,*'}(Y,\ff_2)$
is an extensions of the modules $H\otimes L$, with
$L=((S\otimes_{S^{G}}\ff_2)\otimes\Lambda(x_1,\ldots,x_n))\cdot e_0$, where
the action of $\cq$ comes from that on $\hm^{*,*'}(BV,\ff_2)$.
The latter action commutes with the ``pseudo-topological realisation'' $\zz(c)[d]\mapsto\zz[d]$ by \cite{HKO}. Then, as above we obtain that $\hm^{*,*'}(Y,\ff_2)$ is a free
module over $\Lambda_H(Q_0,\ldots,Q_{n-2})$.
In particular, $Q_0$ acts as an exact differential, which means that $Y$
is $2$-torsion, and so, $\zz_{(2)}$-motive of $Y$ is
$\zz/2$-cellular. The fact that $\hm^{*,*'}(Y,\zz/2)$
is a free module over $\cq$ implies that the $2$-level
of $Y$ is $\geq n-1$. 

To establish the exact value of the level, observe that
from the fact that $((S\otimes_{S^{G}}\ff_2)\otimes\Lambda(x_1,\ldots,x_n))\cdot e_0$ is concentrated in 
bi-degrees
$(i)[j]$ with $0\leq 2i-j\leq n$, the fact that this module
is free over $\cq$ and \cite[Theorem 3.1(c)]{Mit} it follows
that the ``generator'' $\alpha$ of this module (i.e., an element with the smallest $i$ and, simultaneously, the
smallest $j$) has bi-degree $(2^{n-1}-1)[2^n-1-n]$ (note that
each Milnor's operation $Q_s$ decreases the expression $2i-j$
by $1$). Then $\beta:=Q_{n-2}\circ\ldots\circ Q_0(\alpha)$
is non-zero and resides on the line of slope $=2$. 
Let $r\geq n-1$.
Since
$Y$ is a direct summand of the motive of a smooth variety,
all differentials $d_s$ of the Atiyah-Hirzebruch spectral sequence converging from $\hm^{*,*'}(Y,\zz/2)$ to the
$r$-th Morava K-theory $K(r)^{*,*'}(Y)$ vanish on $\beta$
by degree reasons (the targets are above the mentioned line). But, by the same reasons, no differentials
can hit $\beta$, since there is nothing below $\alpha$
(in the ``round'', or ``square'' sense) (note that the first
among these differentials is $d_1=Q_{r}$). Hence,
$K(r)^{*,*'}(Y)\neq 0$ and so, the level of $Y$ is exactly 
$(n-1)$. 
It remains to denote: $X_{2,n-1}:=Y$.

\section{The results}

Thus, we have proven (cf. \cite[Theorem B]{Mit}):

\begin{thm}
 \label{main}
 Let $p$ be prime and $F$ be a field of characteristic different from $p$. Then in $\shE{F}_{(p)}$ 
 there are compact
 $p$-torsion objects of an arbitrary $p$-level $n$. Such an
 object $X_{p,n}$ can be chosen as a direct summand of the suspension
 spectrum of a smooth variety $V\backslash GL(p^{\tilde{n}})$, with
 $V=(\zz/p)^{\tilde{n}}$, where $\tilde{n}=n$, for odd $p$, and $\tilde{n}=n+1$, for $p=2$.
\end{thm}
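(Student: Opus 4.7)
The plan is to realise $X_{p,n}$ as a direct summand of the suspension spectrum of $V\backslash GL(p^{\tilde{n}}, F)$ cut out by a Steinberg-type idempotent: for odd $p$ one takes $\tilde{n}=n$ and applies some $e_k$ with $1\leq k\leq p-2$, while for $p=2$ the orthogonal group in Mitchell's original construction is not special, so I instead use the same $GL$-construction with the dimension shifted to $\tilde{n}=n+1$ and the Steinberg projector $e_0$. In either case, the cartesian square \ref{car-sq} together with the speciality of $GL$ expresses the relative motive $M(\overline{V\backslash GL}\to BV)$ as a tensor product of cones of Chern-class maps, and Mitchell's computation identifies the pullbacks $\rho^*(c_i)$ with the Dickson invariants $D_r$ up to trivial tensor factors.

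The main structural input is the freeness of $\hm^{*,*'}(X_{p,n},\ff_p)$ as a module over the Milnor subalgebra $\cq=\Lambda_H(Q_0,\ldots,Q_{n-1})$. I would establish this by using Koszul exactness on the regular sequence $D_1,\ldots,D_{\tilde{n}}$ to identify $\pi_*(N)$ with (the dual of) $(S\otimes_{S^G}\ff_p)\otimes\Lambda(x_i)$, which in particular makes the mod $p$ motive of $V\backslash GL$ be $\zz/p$-cellular; and then transporting Mitchell's topological freeness \cite[Theorem 3.1]{Mit} to the motivic side via the Betti realization functor in characteristic zero, or via the Chow-degree filtration on $\Lambda$ together with the Hopkins-Kono-Ormsby compatibility \cite{HKO} in positive characteristic. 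Freeness over $\cq$ plus the identification of $Q_0$ with the Bockstein immediately gives $p$-torsion and $\zz/p$-cellularity of the integral motive, and the identification of $Q_m$ with the first Atiyah-Hirzebruch differential converging to $K(m)^{*,*'}$ yields the vanishing of $K(m)^{*,*'}(X_{p,n})$ for $m<n$.

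For the sharp value of the level I would pin down a ``generator'' $\alpha$ of the $\cq$-free module, of minimal bidegree (located precisely because the module is concentrated in bidegrees $(i)[j]$ with $0\leq 2i-j\leq n$, combined with Mitchell's description of the generator in \cite[Theorem 3.1]{Mit}), and show that $\beta:=Q_{n-1}\circ\cdots\circ Q_0(\alpha)$ lives on the line of slope two, $j=2i$. For any $r\geq n$, every non-trivial Atiyah-Hirzebruch differential into $K(r)^{*,*'}$ moves classes strictly above this line and so vanishes on $\beta$; conversely, no differential can hit $\beta$ either, since $\alpha$ is extremal and the ``round size'' of these differentials (starting with $d_1=Q_r$) already exceeds that of $Q_{n-1}\circ\cdots\circ Q_0$. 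Hence $\beta$ survives to $E_\infty$ and $K(r)^{*,*'}(X_{p,n})\neq 0$, forcing the level to be exactly $n$.

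The upper bound on the level follows formally once $\cq$-freeness is in hand, so the main obstacle is really the non-vanishing side, which relies crucially on the bigraded structure of motivic cohomology to cleanly separate ``slope $\leq 2$'' from ``slope $>2$'' classes -- a separation which has no direct topological counterpart. The $p=2$ case demands an additional filtration step to reduce from $S_0\otimes_{Fr_2(S_0^G)}\ff_2$ to $S_0\otimes_{S_0^G}\ff_2$ (where Mitchell's freeness actually applies) and to absorb the $\tau$- and $\{-1\}$-corrections in $\hm^{*,*'}(BV,\ff_2)$; this is precisely why one is forced to shift $n\mapsto n+1$ in that case.
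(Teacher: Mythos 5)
Your proposal follows the paper's proof essentially step-for-step: the same $V\backslash GL$ quotient cut out by Steinberg idempotents (with $\tilde{n}=n+1$ at $p=2$ because the orthogonal group is not special), the same speciality--Chern--Dickson--Koszul chain to obtain $\zz/p$-cellularity and freeness of motivic cohomology over the Milnor subalgebra (via the topological realization functor in characteristic $0$, or the Chow-degree filtration together with \cite{HKO} in positive characteristic), and the same extremal-generator/slope-two argument in the Atiyah--Hirzebruch spectral sequence to show $K(r)^{*,*'}(X_{p,n})\neq 0$ for $r\geq n$. Two minor corrections: \cite{HKO} is Hoyois--Kelly--{\O}stv{\ae}r, not ``Hopkins--Kono--Ormsby,'' and the shift $n\mapsto n+1$ at $p=2$ is forced primarily by Mitchell's Theorem~3.1(c) giving freeness over only $\Lambda(Q_0,\ldots,Q_{n-2})$ for the $GL$-quotient at $p=2$ (the $\tau$- and $\{-1\}$-corrections and the $Fr_2$ filtration are absorbed within the given $\tilde n$, and are not themselves the cause of the shift).
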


Let $M^{MGL}(X_{p,n})\in MGL_{(p)}\!-\!mod$ be the $MGL$-motive 
of our object. Recall that $MGL_{(p)}^{*,*'}$ is a polynomial algebra over the $BP^{*,*'}$-theory. Let $v_m\in BP$ be the standard generator of dimension $p^m-1$. Then we have:

\begin{thm}
 \label{MGL-Xpn}
 $v_m$ is nilpotent on $M^{MGL}(X_{p,n})$, for $m<n$, and is not
 nilpotent, for $m\geq n$.
\end{thm}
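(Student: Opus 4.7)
The theorem splits into two halves. The non-nilpotence statement for $m\geq n$ is forced directly by Theorem \ref{main}: if $v_m^N=0$ on $M^{MGL}(X_{p,n})$ for some $N$, then applying the change-of-coefficients functor $-\otimes_{MGL_{(p)}}K(m)$ would kill $M^{K(m)}(X_{p,n})$, because $v_m$ is a unit in $K(m)^{*,*'}$. This would force $K(m)^{*,*'}(X_{p,n})=0$ over every extension, contradicting what has just been established.

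For the nilpotence of $v_m$ with $m<n$, the plan is to use an Atiyah-Hirzebruch-type spectral sequence
$$
E_2^{*,*'}=\hm^{*,*'}(X_{p,n},\ff_p)\otimes_{\ff_p}(BP^{*,*'}/p)\Rightarrow BP^{*,*'}(X_{p,n})/p,
$$
together with its variants modulo the invariant ideals $(v_0,\ldots,v_{i-1})\subset BP^{*}$. The leading term of the first non-trivial differential modulo $(v_0,\ldots,v_{i-1})$ is $v_i\otimes Q_i$, exactly as in the topological setup. Since $\hm^{*,*'}(X_{p,n},\ff_p)$ is free over $\Lambda_H(Q_0,\ldots,Q_{n-1})$ by the argument of the previous section, each $Q_i$ for $i<n$ is an exact differential on the relevant subquotients, so $v_i$ acts as a boundary on the $E_\infty$-page of the $BP/(v_0,\ldots,v_{i-1})$-variant. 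An inductive descent on $i=0,\ldots,m$ then shows that $v_m$ vanishes on $M^{BP/(v_0,\ldots,v_{m-1})}(X_{p,n})$, and by lifting against the cofibre sequences $BP/J\stackrel{v_i}{\row}BP/J\row BP/(J,v_i)$, that some power of $v_m$ kills $M^{BP}(X_{p,n})$. Compactness of $X_{p,n}$ is what turns the bidegree-by-bidegree vanishing on $E_\infty$ into a uniform nilpotence exponent. Since $MGL_{(p)}$ is a polynomial algebra over $BP$ and the $v_i$'s sit inside $BP$, nilpotence on the $BP$-motive transfers to nilpotence on the $MGL$-motive.

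The main obstacle is the identification of the leading differentials in the motivic AHSS with $v_i\otimes Q_i$ modulo lower $v_j$'s. This rests on the motivic Steenrod algebra machinery of Voevodsky and the compatibility results for motivic realization (cf.\ \cite{HKO}), together with the $\zz/p$-cellularity of $M(X_{p,n};\ff_p)$ and the bidegree boundedness of its motivic cohomology, which secure strong convergence. Once these ingredients are assembled, the inductive argument is classical and is guided by the ``generator'' $\alpha$ and its image $\beta=Q_{n-1}\circ\ldots\circ Q_0(\alpha)$ already used in the computation of the $p$-level of $X_{p,n}$.
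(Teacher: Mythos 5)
The $m\geq n$ direction of your argument is exactly the paper's: non-vanishing of $K(m)^{*,*'}(X_{p,n})$, already established in the proof of Theorem~\ref{main}, prevents any power of $v_m$ from killing the $MGL$-motive, since inverting $v_m$ recovers $K(m)$. That part is fine.

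For $m<n$ there is a genuine gap, and it is precisely where the paper's proof puts its weight. Your AHSS/cofibre-sequence argument, even if carried out rigorously, would at best prove that the \emph{cohomology groups} $BP/(v_0,\ldots,v_{m-1})^{*,*'}(X_{p,n})$ vanish, equivalently that $K(m)^{*,*'}(X_{p,n})=0$ (which the paper establishes in one line from $Q_m$-freeness). The theorem, however, asserts nilpotence of the \emph{endomorphism} $\cdot v_m$ of the object $M^{MGL}(X_{p,n})$ in $MGL_{(p)}\!-\!mod$. In topology the passage from ``$K(m)$-acyclic finite spectrum'' to ``$v_m$ nilpotent on $BP_*(X\wedge DX)$'' is a consequence of the nilpotence/thick-subcategory theorem, which is not available motivically; and ``bidegree-by-bidegree vanishing on $E_\infty$ plus compactness'' does not produce a vanishing self-map. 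The paper supplies the missing structure: Lemma~\ref{Xpn-Tate} shows that (after adjoining $\zeta_p$) $M^{MGL}(X_{p,n})$ lies in the thick subcategory $\mathcal{C}$ of compact $MGL_{(p)}$-modules generated by Tate-motives, which carries a Bondarko weight structure; an induction on the length of the weight support, powered by the algebraic Lemma~\ref{vm-etc} about finitely generated $\laz_{(p)}$-modules with $N\otimes_{\laz_{(p)}}K(m)=0$, then produces an explicit element $v_m^r+\ldots$ annihilating the object; finally Landweber's result \cite[Prop.~2.11]{La73b} upgrades this to a genuine power of $v_m$ using the Landweber--Novikov invariance of the annihilator ideal. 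None of these three ingredients appears in your sketch, and without the Tate-cellularity of the $MGL$-motive (your argument only uses $\zz/p$-cellularity of the mod-$p$ \emph{Voevodsky} motive) the inductive descent you describe cannot be run.

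A secondary issue: the leading differential of the motivic AHSS for $BP/(v_0,\ldots,v_{i-1})$ has the shape $v_iQ_i$ only modulo higher filtration, and one must also worry about $\tau$-factors and $Q_0$ being a twisted Bockstein in motivic cohomology; these are manageable, but the paper deliberately sidesteps them by working with a single Morava theory $K(m)$ at a time and pushing the real work into the module-theoretic Lemma~\ref{vm-etc}.
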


\begin{proof} 
 Suppose, $m<n$.
 Consider the Atiyah-Hirzebruch
 spectral sequence converging from motivic cohomology of $X_{p,n}$ to
 the $m$-th Morava K-theory of it. The first differential in it is the Milnor's operation $Q_m$. Since our motivic cohomology is a free module over $\Lambda_H(Q_m)$, this spectral sequence
 vanishes after the 1-st term, and so, 
 $K(m)^{*,*'}(X_{p,n})=0$. 
 
 Since we consider $MGL$-motives with $\zz_{(p)}$-coefficients,
 we may assume that $F$ contains a primitive $p$-th root $\zeta_p$ of $1$
 (by transfer arguments, since we can always acquire such a root
 by passing to an extension of degree a divisor of $(p-1)$).
 
 \begin{lem}
  \label{Xpn-Tate}
  Suppose, $\zeta_p\in F$. Then the motive $M^{MGL}(X_{p,n})\in MGL_{(p)}\!-\!mod$
  is an extension of finitely many Tate-motives.
 \end{lem}

 \begin{proof}
 Consider the $GL$-fibration $EGL\row BGL$ from the diagram
 (\ref{car-sq}). Denote: $d=p^{\tilde{n}}$. The chain of subgroups
 $$
 \{e\}\row GL_1\row GL_2\row\ldots\row GL_{d-1}\row GL_d
 $$
 gives the tower of fibrations
 $$
 EGL\row BGL_1\row BGL_2\row\ldots\row BGL_{d-1}\row BGL_d.
 $$
 The fibration $BGL_{r-1}\row BGL_r$ has the fiber
 $(\aaa^r\backslash 0)\times\aaa^{r-1}$ and so, $BGL_{r-1}$ is
 an $(r-1)$-dimensional bundle over a punctured (by the zero-section) $r$-dimensional bundle on $BGL_r$. Hence, the $MGL$-motive
 $M^{MGL}(BGL_{r-1}\row BGL_r)\in MGL(BGL_r)\!-\!mod$ is an extension of two Tate-motives. Consequently, the $MGL$-motive
 $M^{MGL}(EGL\row BGL)\in MGL(BGL)\!-\!mod$ is an extension of
 $2^d$ Tate-motives. A bit more careful analysis would reveal that this motive is, actually, isomorphic to
 $$
 \otimes_{i=1}^d\op{Cone}[-1](T\stackrel{c_i}{\lrow}T(i)[2i]),
 $$
 where $c_i\in MGL^{2i,i}(BGL)$ is the $i$-th Chern class, but
 we don't need such precision. From the cartesian square 
 (\ref{car-sq}), the motive of the fibration
 $\ov{V\backslash GL}\row BV$ is also an extension of $2^d$
 Tate-motives. 
 
 Since $\zeta_p\in F$,
 we can choose $EG\zz/p=\aaa^{\infty}\backslash 0$, which can be identified with the $\gm$-bundle $O(-1)$ on $\pp^{\infty}$. It has the natural action of $\zz/p$ (via scaling by $\zeta_p$). The quotient $B\zz/p$ is then naturally identified with the $\gm$-bundle $O(-p)$ on $\pp^{\infty}$. In other words, it
 is the complement to the zero section in the line bundle
 $O_{\pp^{\infty}}(-p)$. Hence,
 $$
 M^{MGL}(B\zz/p)=\op{Cone}[-1](M^{MGL}(\pp^{\infty})\stackrel{[-p](t)}{\lrow}M^{MGL}(\pp^{\infty})(1)[2]),
 $$
 where $t=c_1(O(1))$. In particular, it is an extension of Tate-motives, and the same holds for $M^{MGL}(BV)=M^{MGL}(B\zz/p)^{\otimes\tilde{n}}$. This implies that
 $M^{MGL}(V\backslash GL)=\pi_*(M^{MGL}(\ov{V\backslash GL}\row BV))$, where $\pi:BV\row\op{Spec}(F)$ is a natural projection, is also an extension of Tate-motives. On the other hand, $V\backslash GL$ is a finite-dimensional variety. Hence, its'
 motive is compact and so, $M^{MGL}(V\backslash GL)$ belongs
 to the thick triangulated subcategory ${\cal C}$ of $MGL(F)^c_{(p)}\!-\!mod$ generated by Tate-motives, and the same is true for 
 $M^{MGL}(X_{p,n})=M^{MGL}(V\backslash GL)\cdot e_k$. 
 \Qed
 \end{proof}
 
 On the subcategory ${\cal C}$ we have a non-degenerate weight structure in the sense of Bondarko \cite{Bon},
 whose heart ${\cal H}$ consists of (finite) direct sums of {\it pure} Tate-motives $T(i)[2i],\,i\in\zz$. 
 
 Now, we will prove that any object $U$ od ${\cal C}$ with
 zero Morava K-theory $K(m)^{*,*'}(U)$ is killed by an appropriate element of the form $v_m^s+\ldots$. We will do it by the induction on the length $k-l$ of the support interval $[l,k]$ of our object
 (that is, $U$ is an extention of objects from ${\cal H}[i]$,
 where $l\leq i\leq k$). If the length is negative, the object is zero and there is nothing to prove. We have an exact triangle
 $$
 U_{<k}\row U\row U_k\row U_{<k}[1],
 $$
 where $U_k\in {\cal H}[k]$ is the highest graded piece of $U$ and $U_{<k}$ is supported on $[l,k-1]$. Let $U_r=W_r[r]$, where
 $W_r\in{\cal H}$. Since $MGL^{*,*'}$ of a smooth variety vanishes above the line of slope two, we have:
 $MGL^{j,i}_{(p)}(U)=0$, for $j>2i+k$, while $MGL^{j,i}_{(p)}(U)=\op{Coker}(MGL^{j,i}_{(p)}(U_{k-1}[1])\row MGL^{j,i}_{(p)}(U_k))$, for $j=2i+k$.
 Also, $K(m)^{2*+k,*}(U)=MGL^{2*+k,*}_{(p)}(U)\otimes_{\laz_{(p)}}K(m)$, where $K(m)=\ff_p[v_m,v_m^{-1}]$ is the coefficient ring of 
 the pure part of our Morava K-theory. We know that $K(m)^{2*+k,*}(U)$ is zero. On the other hand, 
 $MGL^{2*+k,*}_{(p)}(U)$ is a finitely generated $\laz_{(p)}$-module. 
 There is the following fact:
 
 \begin{lem}
  \label{vm-etc}
  Let $N$ be a finitely generated $\laz_{(p)}$-module such that 
  $N\otimes_{\laz_{(p)}}K(m)=0$. Then $Ann_{\laz_{(p)}}(N)\otimes_{\laz_{(p)}}K(m)=K(m)$.
 \end{lem}

 \begin{proof}
  We will prove by induction on the number $n$ of generators of $N$ that $Ann_{\laz_{(p)}}(N)$ contains an element of the form $v_m^r+\ldots$, for some $r$. The base $n=1$ is obvious.
  Let $N=(y_1,\ldots,y_n)$, and 
  $I\subset\laz$ be the ideal generated by $p$ and all other generators, aside from $v_m$ (i.e., generators $x_i$ of dimension $i\neq p^m-1$). Since 
  $N\otimes_{\laz_{(p)}}K(m)=0$, $y_i\cdot v_m^{r_i}=\sum_{j=1}^ny_j\cdot u_{j,i}$, where $u_{j,i}\in I$. Then
  $y_i(v_m^{r_i}-u_{i,i})=\sum_{j\neq i}y_j\cdot u_{j,i}$ and 
  $y_i((v_m^{r_i}-u_{i,i})(v_m^{r_n}-u_{n,n})-u_{n,i}u_{i,n})=
  \sum_{j\neq i,n}y_j\cdot w_j$, for some $w_j\in I$. Hence,
  $N_{(1,\ldots,n-1)}=(y_1,\ldots,y_{n-1})$ also satisfies:
  $N_{(1,\ldots,n-1)}\otimes_{\laz}K(m)=0$. By inductive 
  assumption, $N_{(1,\ldots,n-1)}$ is annihilated by an element of the form $v_m^s+\ldots$. The same is true for $N_{(2,\ldots,n)}$, and so, for $N$. The induction step and the statement are proven.
 \Qed
 \end{proof}
 
 Recalling that $W_k$ is a sum of pure Tate-motives and considering
 $N= MGL^{2*+k,*}_{(p)}(U\otimes W_k^{\vee})$ it follows from Lemma \ref{vm-etc} that there exists $\tilde{v}=v_m^r+\ldots$, such that the map $\cdot\tilde{v}:W_k\row W_k(*)[2*]$ factors
 through the map $W_k\row W_{k-1}$. 
 Observe that $l<k$, since Tate-motives have non-trivial
 Morava K-theory. If $l=k-1$, then 
 $U=\op{Cone}(W_k\row W_{k-1})[k-1]$, and considering the dual
 object $U^{\vee}$, we obtain that there exists
 $\tilde{v}'=v_m^{r'}+\ldots$,
 such that the map $\cdot\tilde{v}':W_{k-1}\row W_{k-1}(*')[2*']$ factors through the map $W_k\row W_{k-1}$. Both conditions combined imply that the multiplication by $\tilde{v}'\cdot\tilde{v}$ is zero on $U$. So, we may assume
 that $l<k-1$. Let 
 $Y=\op{Cone}[-1](W_k\stackrel{\cdot\tilde{v}}{\lrow}W_k(*)[2*])$. Then we have a map of exact triangles:
 $$
 \xymatrix{
 U_{<k} \ar[d] \ar[r] & U \ar[d] \ar[r] & W_k[k] \ar@{=}[d] \ar[r] & U_{<k}[1] \ar[d] \\
 W_k(*)[2*+k-1] \ar[r] & Y[k] \ar[r] & W_k[k] \ar[r]^(0.4){\cdot\tilde{v}} & W_k(*)[2*+k]
 }
 $$
 Let $Z=\op{Cone}[-1](U\row Y[k])$. Then $Z$ has support $[l,k-1]$ (since $l<k-1$). As $\tilde{v}$ is invertible in $K(m)$, the
 Morava K-theory $K(m)^{*,*'}(Y)$ is zero, and hence, 
 $K(m)^{*,*'}(Z)=0$. Note that $Y=W_k\otimes\op{Cone}[-1](T\stackrel{\cdot\tilde{v}}{\row}T(*)[2*])$ is killed by $\tilde{v}^2$ by \cite[Proposition 2.13]{BalSSS}. Hence,
 if $Z$ is killed by $w\in\laz$, then $U$ is killed by
 $\tilde{v}^2\cdot w$. By induction on the support, it follows
 that any object $U$ of ${\cal C}$ with trivial $K(m)^{*,*'}$ is 
 killed by an element of the form $v_m^r+\ldots$. 
 
 Since our object $M^{MGL}(X_{p,n})$ comes from $\shE{F}_{(p)}$,
 the annihilator of it is an ideal of $\laz$ invariant under 
 Landweber-Novikov operations. By the above,
 this ideal contains some element $v_m^r+...$. By the
 result of Landweber \cite[Proposition 2.11]{La73b}, it contains
 then some power of $v_m$.
 
 The fact that, for $m\geq n$, $v_m$ is not nilpotent on
 $M^{MGL}(X_{p,n})$ is clear, since $K(m)^{*,*'}(X_{p,n})\neq 0$,
 in this case. This finishes the proof of Theorem \ref{MGL-Xpn}.
 \Qed
\end{proof}

We also have the following refinement of Theorem \ref{main}, which is used in the study
of the Balmer spectrum of Morel-Voevodsky category - \cite{BsMV}.

Let $R/F$ be some smooth variety and ${\frak X}_R$ be the suspension spectrum of the \v{C}ech simplicial scheme of $R$. 
This is a $\wedge$-projector, while the natural map to the point 
gives the distinguished triangle
$$
{\frak X}_R\row{\mathbbm{1}}\row\wt{{\frak X}}_R\row
{\frak X}_R[1]
$$
in $\shE{F}$, where $\wt{{\frak X}}_R$ is the complementary projector. 

Denote as $K(m)^*:=K(m)^{2*,*}$ the {\it pure part} of our Morava
K-theory considered as an oriented cohomology theory on the
category ${\mathbf Sm}_F$ of smooth varieties over $F$.
Recall from \cite[Definition 2.1]{INCHKm}, that a smooth projective variety $R\stackrel{\pi}{\row}\op{Spec}(F)$ is called {\it $K(m)$-anisotropic}, if
the push-forward map $\pi_*:K(m)_*(R)\row K(m)_*(\op{Spec}(F))$ is zero.

Let $X_{p,n}^{\vee}$ be the dual to our compact
object $X_{p,n}$.

\begin{prop}
 \label{Xpn-hi}
 For any smooth projective $K(r)$-anisotropic variety $R/F$, the map
 $$
 K(r)_{2*,*}(X_{p,n}^{\vee})\row
 K(r)_{2*,*}(X_{p,n}^{\vee}\wedge \wt{{\frak X}}_R)
 $$ 
 is
 an isomorphism. In particular, for any $r\geq n$, the pure part of the 
 Morava K-theory $K(r)_{2*,*}(X_{p,n}^{\vee}\wedge \wt{{\frak X}}_R)$ is non-zero. 
\end{prop}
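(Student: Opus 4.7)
The second statement is an immediate consequence of the first combined with Theorem \ref{main}: the non-zero element $\beta\in K(r)^{2*,*}(X_{p,n})$ surviving in the Atiyah-Hirzebruch spectral sequence (as in the proof of Theorem \ref{main}) dualises to a non-zero class in $K(r)_{2*,*}(X_{p,n}^\vee)$ for $r\geq n$, and the claimed isomorphism propagates this non-vanishing to $K(r)_{2*,*}(X_{p,n}^\vee\wedge\wt{{\frak X}}_R)$.

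For the first statement, smashing the defining triangle ${\frak X}_R\to{\mathbbm 1}\to\wt{{\frak X}}_R$ with $X_{p,n}^\vee$ and taking the long exact sequence in $K(r)_{*,*}$, the map in the statement becomes an isomorphism on the pure part $(j,i)=(2i,i)$ as soon as
$$K(r)_{j,i}(X_{p,n}^\vee\wedge{\frak X}_R)=0\qquad \text{for all } i\in\zz,\ j\in\{2i-1,2i\}.$$
I take this vanishing as the main content to be proven.

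The plan is to combine two inputs. First, by Lemma \ref{Xpn-Tate}, $M^{MGL}(X_{p,n}^\vee)\in MGL_{(p)}\!-\!mod$ is a finite extension of Tate motives; consequently $K(r)\wedge X_{p,n}^\vee\wedge{\frak X}_R$ admits a finite filtration whose associated graded pieces are pure Tate shifts $(a)[2a]$ of $K(r)\wedge{\frak X}_R$. Crucially, such Tate shifts preserve the value $j-2i$, so they map the pure and near-pure diagonals to themselves. Second, one exploits the simplicial presentation ${\frak X}_R=|\check{C}(R)_\bullet|$ with $\check{C}(R)_p=R^{\times(p+1)}$ to produce a Čech spectral sequence whose $E_1$-terms involve $K(r)_{*,*}(R^{\times(p+1)}_+)$. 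After the standard transfer-argument reduction permitting the assumption $\zeta_p\in F$, Künneth for pure motivic Morava K-theory on the smooth projective varieties $R^{\times(p+1)}$ and the $K(r)$-anisotropy of $R$ (propagated via the projection formula, together with the specific Tate-shift pattern of $X_{p,n}^\vee$ coming from the projector $e_k$) force the Čech differentials to annihilate precisely the pure and near-pure contributions to the abutment.

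The principal technical obstacle is the Čech spectral-sequence bookkeeping in the bidegrees $(2i-1,i)$ and $(2i,i)$: one must control the Künneth formula for pure Morava K-theory on self-products of $R$, track the propagation of $K(r)$-anisotropy through the simplicial filtration, and verify that no extensions or higher differentials produce spurious surviving classes on the two diagonals. The argument relies on the specific Tate structure of $M^{MGL}(X_{p,n}^\vee)$ established in the proof of Theorem \ref{main} (rather than on a general vanishing principle for ${\frak X}_R$ alone), since for $Y={\mathbbm 1}$ the analogous isomorphism fails.
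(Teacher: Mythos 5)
The paper's proof takes a different and more direct route than yours. Rather than aiming for a vanishing statement about $K(r)_{*,*}(X_{p,n}^\vee\wedge{\frak X}_R)$ via a \v{C}ech spectral sequence, the paper exploits the Bondarko weight structure on the Tate subcategory of $MGL_{(p)}$-modules. Since $M^{MGL}(X_{p,n}^\vee)\in D_{\geq 0}$ (dual of a retract of a smooth variety) and $K(r)^{*,*'}$ vanishes strictly above the line of slope two, the pure part $K(r)_{2*,*}(X_{p,n}^\vee)$ is computed as $\op{Coker}(f_*:K(r)_{2*,*}(W_1)\to K(r)_{2*,*}(W_0))$ from the first two terms of a weight complex. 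The paper then observes that $M^{MGL}(\wt{\frak X}_R)$ decomposes as an extension of $\op{Cone}(M^{MGL}(R)\to{\mathbbm 1}^{MGL})$ (weights $0,1$) by something in $D_{>1}$, so after smashing one only picks up the extra term $K(r)_{2*,*}(W_0\otimes R)\xrightarrow{\pi_*}K(r)_{2*,*}(W_0)$, and $\pi_*=0$ by $K(r)$-anisotropy. Both sides of the map in the proposition are thus identified with the same cokernel. This computes the groups directly and avoids any simplicial/\v{C}ech bookkeeping.

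Your reduction (via the long exact sequence) to vanishing of $K(r)_{j,i}(X_{p,n}^\vee\wedge{\frak X}_R)$ for $j\in\{2i-1,2i\}$ is a \emph{sufficient} condition, and it is plausibly even true, but you have not established it, and as you acknowledge, the \v{C}ech spectral sequence argument you sketch is left as an unresolved "technical obstacle." The difficulty is real: ${\frak X}_R$ is an infinite simplicial object, the $E_1$-page involves $K(r)_{*,*}(R^{\times(p+1)})$ which is not controlled by $K(r)$-anisotropy (that notion only kills the pushforward $\pi_*$, not $K(r)$ of $R$ itself), and convergence on the two near-pure diagonals needs an argument you do not supply. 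Also note a strategic inefficiency: for injectivity of the map in the statement you do not need $K(r)_{2i,i}(X_{p,n}^\vee\wedge{\frak X}_R)=0$; you only need the map to $K(r)_{2i,i}(X_{p,n}^\vee)$ to vanish, which in the paper's framework is immediate because on weight-$0$ pieces it factors through the anisotropic pushforward $\pi_*$. Your route tries to prove a stronger statement than the paper needs, and the mechanism you invoke is not carried out. The treatment of the second assertion (non-vanishing via $\beta$ dualised) matches the paper.
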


\begin{proof}
By transfer arguments we may assume that $F$ contains a primitive
$p$-th root $\zeta_p$ of $1$. Then, by Lemma \ref{Xpn-Tate},
the $MGL$-motive $M^{MGL}(X_{p,n})$ of our torsion space is an
extension of finitely many Tate-motives. On the Tate-motivic subcategory of $MGL(F)_{(p)}^c-mod$ we have a weight structure
in the sense of Bondarko, with the heart ${\cal H}$ consisting of direct sums of pure Tate motives. Since $X_{p,n}$ is a direct summand 
in (the suspension spectrum of) a smooth variety, its' $MGL$-motive belongs
to $D_{\leq 0}$, and so $M^{MGL}(X_{p,n}^{\vee})\in D_{\geq 0}$.
Let $\ldots\row W_1\stackrel{f}{\row}W_0$ be the respective weight complex (corresponding to some choice of the weight filtration), where $W_i$ is a direct sum of pure Tate-motives.
Since Morava K-theory $K(r)^{*,*'}$ of a smooth variety is zero
above the line of slope $2$, we have:
$$
K(r)_{2*,*}(X_{p,n}^{\vee})=\op{Coker}(K(r)_{2*,*}(W_1)
\stackrel{f_*}{\row}K(r)_{2*,*}(W_0)).
$$
Similarly, since $M^{MGL}({\wt{\frak X}}_R)$ 
is an extension of $\op{Cone}(M^{MGL}(R)\row{\mathbbm{1}}^{MGL})$ and the rest which
belongs to the $D_{>1}$ part of the weight filtration on the $MGL$-modules, we have:
$$
K(r)_{2*,*}(X_{p,n}^{\vee}\wedge \wt{{\frak X}}_R)=
\op{Coker}(K(r)_{2*,*}(W_1)\oplus K(r)_{2*,*}(W_0\otimes R)
\stackrel{(f_*,\pi_*)}{\row}K(r)_{2*,*}(W_0)).
$$
But since $W_0$ is a direct sum of Tate-motives and $R$ is
$K(r)$-anisotropic, the map $\pi_*$ here is zero. Thus,
$K(r)_{2*,*}(X_{p,n}^{\vee}\wedge \wt{{\frak X}}_R)=
K(r)_{2*,*}(X_{p,n}^{\vee})=K(r)^{-2*,-*}(X_{p,n})$ and the latter group is non-zero
as was shown in the proof of Theorem \ref{main} (the constructed
non-trivial element $\beta$ was pure).
 \Qed
\end{proof}

This immediately implies:

\begin{cor}
 The action of $v_m$ is not nilpotent on the $MGL$-motive of 
 $X_{p,n}^{\vee}\wedge \wt{{\frak X}}_R$, for any
 $m\geq n$ and any $K(m)$-anisotropic variety $R$.
\end{cor}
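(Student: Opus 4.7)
The plan is to prove this by contradiction, using Proposition \ref{Xpn-hi} as essentially the only input. Suppose that $v_m$ acts nilpotently on $M^{MGL}(X_{p,n}^{\vee}\wedge\wt{{\frak X}}_R)$, so that some power $v_m^k$ annihilates this $MGL$-module. The standard trick is to propagate this annihilation from $MGL$-theory to Morava K-theory.

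The key observation is that Morava K-theory $K(m)^{*,*'}$ (or rather its $MGL$-module incarnation, after base change along $MGL_{(p)}\to K(m)$) receives a natural $MGL$-algebra structure in which the Lazard-ring element $v_m$ acts by the Morava K-theory generator. Consequently, if $v_m^k$ kills $M^{MGL}(X_{p,n}^{\vee}\wedge\wt{{\frak X}}_R)$, then $v_m^k$ must act as zero on $K(m)^{*,*'}(X_{p,n}^{\vee}\wedge\wt{{\frak X}}_R)$. Since $v_m$ is a unit in the coefficient ring $K(m)^{*,*'}=\ff_p[v_m,v_m^{-1}]$, this forces the whole $K(m)^{*,*'}(X_{p,n}^{\vee}\wedge\wt{{\frak X}}_R)$ to vanish, and in particular its pure part $K(m)_{2*,*}(X_{p,n}^{\vee}\wedge\wt{{\frak X}}_R)$ to vanish.

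But Proposition \ref{Xpn-hi} asserts exactly that for $m\geq n$, the pure part $K(m)_{2*,*}(X_{p,n}^{\vee}\wedge\wt{{\frak X}}_R)$ is non-zero (it is canonically isomorphic to $K(m)_{2*,*}(X_{p,n}^{\vee})$, which in turn contains the non-trivial pure element $\beta$ produced in the proof of Theorem \ref{main}). This contradiction closes the argument.

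There is no real obstacle here: the whole substance of the statement has already been accomplished in Proposition \ref{Xpn-hi}; the present corollary is just the observation that non-vanishing of the pure part of $K(m)$-theory is incompatible with nilpotence of $v_m$ on the $MGL$-motive, via the $MGL$-algebra structure on $K(m)$.
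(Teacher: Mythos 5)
Your argument is correct and is precisely the one the paper has in mind: the paper states the corollary as an immediate consequence of Proposition~\ref{Xpn-hi}, and the implicit reasoning (spelled out explicitly at the end of the proof of Theorem~\ref{MGL-Xpn} for $X_{p,n}$ itself) is exactly that non-vanishing of $K(m)^{*,*'}$ rules out nilpotence of $v_m$ on the $MGL$-motive, since $v_m$ acts invertibly on the base-changed $K(m)$-module. No gap; same route.
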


\bigskip

\begin{itemize}
\item[address:] {\small School of Mathematical Sciences, University of Nottingham, University Park, Nottingham, NG7 2RD, UK}
\item[email:] {\small\ttfamily alexander.vishik@nottingham.ac.uk}
\end{itemize}

\end{document}